\documentclass[letterpaper, 10 pt, conference]{ieeeconf}  

\IEEEoverridecommandlockouts                              

\usepackage{hyperref}
\usepackage{amsmath}
\usepackage{amssymb}

\usepackage{mathtools}
\usepackage{float}
\usepackage{graphicx}
\usepackage{setspace}
\usepackage[dvipsnames]{xcolor}
\usepackage{caption}
\usepackage{subcaption}
\usepackage{braket}
\usepackage{comment}

\usepackage{algorithm}
\usepackage{algpseudocode}
\usepackage{cite}

\newtheorem{theorem}{Theorem} 
  
 \newtheorem{proposition}{Proposition} 
 \newtheorem{corollary}{Corollary}[section] 
 \newtheorem{definition}{Definition}[section]
  
 \newtheorem{remark}{Remark}[section] 
 \newtheorem{lemma}{Lemma} 
 \newtheorem{example}{Example}[section]

 \usepackage{bm} 
\usepackage{multirow}
\usepackage{adjustbox}

\usepackage{graphicx}      

\newcommand{\be}{\begin{equation}}
\newcommand{\ee}{\end{equation}}

\newcommand{\bma}{\begin{bmatrix}}
\newcommand{\ema}{\end{bmatrix}}
\newcommand{\R}{\mathbb{R}}

\newcommand{\mc}{\mathcal}

\title{\LARGE \bf
Interpolation Conditions for Instant Data Consistency with Port-Hamiltonian Structure
}

\author{Martina Vanelli, Nima Monshizadeh, Julien M. Hendrickx
	\thanks{M. Vanelli and N. Monshizadeh are with the Engineering and Technology Institute
		Groningen, University of Groningen, Nijenborgh 4, 9747 AG
		Groningen, The Netherlands. Email: 
		{\tt \footnotesize \{m..vanelli, n.monshizadeh\}@rug.nl.} 
		J. M. Hendrickx is with the ICTEAM Institute, UCLouvain, B-1348, Louvain-la-Neuve, Belgium	
	(email:	{\tt \footnotesize	julien.hendrickx@uclouvain.be}).  This work was supported by the Concerted Research Action (ARC) via the
“SIDDARTA” project 
and by FNRS via the "InterpoControl" Research Project.}
}

\begin{document}

\maketitle
\thispagestyle{empty}
\pagestyle{empty}

{
\begin{abstract}
We develop a data-driven framework for nonlinear port-Hamiltonian (pH) systems based on interpolation conditions to characterize consistency between observed data and structured dynamical models. Specifically, we derive necessary and sufficient conditions for the existence of a pH system with a smooth (convex) Hamiltonian instantly consistent with a given dataset, without requiring explicit parametrization. We further provide a semidefinite programming formulation to verify consistency with non-degenerate interconnection and dissipation structures. Our results provide a principled approach to assess instant data consistency with physical structure and pave the way for control design directly from data.

\end{abstract}

\section{Introduction}
Data-driven control has emerged as a powerful paradigm for the analysis and control of dynamical systems \cite{markovsky2021behavioral}, enabling controller design directly from measured data without explicit model identification. Closely related to the behavioral approach, it has achieved strong guarantees for linear systems, including stabilization, optimal control, and robustness \cite{de2019formulas, van2020willems, berberich2020data, bisoffi2022data}, as well as hybrid approaches that combine data with partial prior knowledge \cite{berberich2022combining, niknejad2023physics}.

Extending these ideas to nonlinear systems remains challenging due to their intrinsic complexity and richer dynamical behavior. Existing approaches include virtual reference control \cite{campi2002vrft}, kernel-based methods \cite{tanaskovic2017data}, model-free and intelligent PID approaches \cite{fliess2013modelfree}, sampled-data control schemes \cite{fraile2020stabilization}, nonlinear data-enabled predictive control \cite{berberich2024overview}, and operator-theoretic techniques such as dynamic mode decomposition and Koopman-based design \cite{proctor2016dmdc,korda2018koopman}. Despite these advances, many methods remain tailored to specific system classes, such as bilinear \cite{bisoffi2020bilinear}, polynomial \cite{guo2021polynomial}, rational \cite{strasser2021beyond}, or flat systems \cite{alsalti2021flat}. Even within more unified viewpoints \cite{monshizadeh2025versatile}, existing approaches typically rely on restrictive modeling choices, such as predefined basis functions, lifting maps, or parametrized function classes, limiting their applicability.

In this work, we adopt a different perspective by integrating measured data with prior physical insight without requiring explicit parametrization of the system dynamics. Specifically, we investigate whether observed trajectories are \emph{instantly consistent} with a structured class of dynamical systems, and whether this consistency can serve as a foundation for control design.

Building on previous work for linear systems \cite{vanelli2025interpolation}, we extend this viewpoint to nonlinear dynamics by focusing on port-Hamiltonian (pH) systems. This class is particularly relevant for physical applications, as it captures key properties such as energy conservation, dissipation, and interconnection structure \cite{van2014port}. The pH framework represents systems through an energy function together with structure and dissipation matrices, naturally encoding passivity. These features make pH systems well suited for control design \cite{van2000l2}, including passivity-based control and interconnection and damping assignment (IDA-PBC) techniques \cite{ortega2002interconnection}. While pH systems have also been studied in data-driven control, including adaptive control \cite{Wang2007,Dirksz2010}, reinforcement learning \cite{Sprangers2014}, iterative and repetitive learning control \cite{Fujimoto2003}, robust design \cite{Ryalat2018}, and Bayesian approaches \cite{BeckersGPPHS2022,BeckersCDC2023,BeckersColombo2025}, these methods typically rely on parametrized Hamiltonians, fixed or parametrized interconnection and damping structures, or chosen basis functions.

Within this setting, we derive necessary and sufficient conditions for the existence of a port-Hamiltonian system, with prescribed properties such as smoothness and convexity of the Hamiltonian, that is instantly consistent with a given dataset. The proposed conditions depend solely on the data and avoid explicit parametrization, which is particularly advantageous in nonlinear settings where suitable parameterizations may be unavailable. Moreover, the framework does not impose requirements on dataset size, instead combining data with prior physical structure to characterize what can be inferred directly from the available information.

Our approach relies on interpolation conditions, i.e., necessary and sufficient conditions on a finite set of points that guarantee the existence of a function within a prescribed class interpolating the data. Such conditions have recently played a central role in optimization, particularly for smooth convex functions \cite{taylor2017smooth,taylor2017convex}, enabling tight worst-case performance analysis \cite{rubbens2023interpolation,bousselmi2024interpolation}. Here, we take a first step toward leveraging these tools in data-driven control to characterize trajectories consistent with structured dynamical systems.

The main contributions of this paper are as follows:
\begin{itemize}
\item We introduce an interpolation-based framework to characterize instant data consistency with port-Hamiltonian systems, without requiring any parametrization of the Hamiltonian or system dynamics.
\item We derive necessary and sufficient algebraic conditions for consistency with smooth and smooth convex Hamiltonians, expressed purely in terms of data.
\item We show that, in the full-rank case, these conditions admit an equivalent convex semidefinite programming (SDP) formulation, enabling tractable verification.
\end{itemize}
We illustrate the proposed approach through numerical simulations and conclude with a discussion on how the framework could be extended toward data-driven control design. 

The rest of the paper is organized as follows. We recall the preliminaries on port-Hamiltonian systems in Section \ref{sec:preliminaries}, present the problem setting in Section \ref{sec:problem}, the main results in Section \ref{sec:main_results} and numerical simulations in Section  \ref{sec:numerical_sim}.  In Section \ref{sec:disc}, we discuss the implications of these conditions for future data-driven control synthesis. We conclude with Section \ref{sec:conc} with a summary and directions for future research.

\subsection{Preliminaries on Port-Hamiltonian systems}\label{sec:preliminaries}
Port-Hamiltonian (pH) systems \cite{van2014port} 
are a class of dynamical systems that model the exchange of energy between a system and its environment in a structured way. A standard continuous-time pH system is described by
\begin{equation}\label{eq:pH_general}
\begin{aligned}
    \dot{x} &= [J(x) - R(x)] \, \nabla H(x) + G(x) u\,,\\
    y &= G(x)^\top \nabla H(x)
\end{aligned}
\end{equation}
where $x \in \mathbb{R}^n$ is the state, $u \in \mathbb{R}^m$ the input, $H: \mathbb{R}^n \to \mathbb{R}$  is a smooth Hamiltonian representing the system energy. The matrix $J(x) = -J(x)^\top$ is skew-symmetric and encodes energy-preserving interconnections, while $R(x) = R(x)^\top \succeq 0$ is positive semidefinite and specifies the dissipation in the system.  The matrix $G(x)$ describes how inputs are coupled to the system. A key property of pH systems 
is passivity with storage function 
$H$. Indeed, 
\begin{equation}
\begin{aligned}
\dot{H}(x) &= \nabla H(x)^\top \dot{x}\\&= - \nabla H(x)^\top R(x) \nabla H(x) + y^\top u \\& \le y^\top u\,.
\end{aligned}
\end{equation}

\section{Problem setting}\label{sec:problem}

We consider a dataset consisting of measurements 
\be\label{eq:meas}
\mc M=\{\dot{x}_i,\, x_i,\, u_i\}_{i\in I}\,, \quad I=\{1,\dots, T\}
\ee
collected from a continuous-time time-invariant dynamical system. 
In line with our previous results for linear systems \cite{vanelli2025interpolation}, our aim is to determine whether the data are instantly consistent with a structured class of dynamical systems.

Specifically, we investigate whether, for a given matrix $G$, the data could have been generated by a noise-less port-Hamiltonian system
\begin{equation}\label{eq:pH^{(d)}}
\dot{x} = (J - R)\nabla H(x) + G u \,,
\end{equation}
where $J=-J^\top$ and $R=R\succeq 0$ are \emph{constant} matrices. We further impose $H(x)\ge 0$ for all $x \in \R^n$, which ensures that the system with output $y=G^\top \nabla H(x)$ is passive with storage function $H$.

Studying consistency with pH systems is particularly relevant in our framework, as these models naturally encode physical structure: $J$ captures energy-preserving interconnections, $R$ represents dissipation, and $H$ corresponds to stored energy. Verifying consistency therefore ensures not only that the data can be explained by a dynamical model, but also that this model satisfies intrinsic properties such as passivity, stability, and energy balance. This perspective is especially relevant when prior physical structure is available but the underlying energy function and interconnection structure are unknown, and it enables the use of passivity-based control design techniques.

In contrast to approaches relying on explicit parametrization of the Hamiltonian (e.g., basis expansions or neural approximations), we do not restrict attention to any finite-dimensional representation. Instead, we adopt a nonparametric viewpoint in which the Hamiltonian is characterized through regularity properties. Let $L > 0$ and $\mu \ge 0$. We define the class of $L$-smooth functions as
$$
\begin{aligned}
\mathcal{H}_{L}:= \{ &H \in C^1(\mathbb{R}^n) \;|\;
\\ &\|\nabla H(x) - \nabla H(y)\| \le L \|x - y\|,\ \forall x,y \in \mathbb{R}^n \}\,,
\end{aligned}
$$
and the class of $L$-smooth $\mu$-strongly convex functions as
$$
\begin{aligned}
\mathcal{H}_{\mu, L}:= \{ &H \in C^1(\mathbb{R}^n) \;|\;
\\ &H(y) \ge H(x) + \nabla H(x)^\top (y-x) + \frac{\mu}{2}\|y-x\|^2, \\& \forall x,y \in \mathbb{R}^n  \}.
\end{aligned}
$$
We further define the nonnegative subclasses
$$
\begin{aligned}
&\mathcal{H}^+_{L}:= \left\{ H \in \mathcal{H}_{L}\mid H(x) \ge 0,\ \forall x \in \mathbb{R}^n \right\},\\
&\mathcal{H}^+_{\mu,L} := \left\{ H \in \mathcal{H}_{\mu,L} \mid H(x) \ge 0,\ \forall x \in \mathbb{R}^n \right\}.
\end{aligned}
$$
Note that, for $\mu=0$, the set $\mc H_{0, L}$ (resp. $\mc H^+_{0, L}$) corresponds to $L$-\emph{smooth} convex (resp. non-negative) functions. These classes ensure well-behaved energy landscapes: smoothness bounds gradient variation, while (strong) convexity guarantees structural properties such as uniqueness of equilibria and robustness of stability.

\subsection{Objectives}\label{ss:obj}

Let us first give the following definition.

\begin{definition}[Instant data-consistency with pH system]
Let $\mc H$ be a class of \emph{non-negative} functions. The pair $(\mc M, G)$ is said to be \textit{instantly consistent} with a \emph{pH system} in the class $\mc H$ if there exist $J=-J^\top$, $R=R^\top\succeq 0$, and $H \in \mc H$ such that \eqref{eq:pH^{(d)}} holds for all triples $(\dot x_i,x_i,u_i)$ for all
$i \in I$. If $J-R$ is nonsingular, the data are said to be instantly consistent with a \emph{full-rank} pH system.
\end{definition}

Given this setup, we address two main problems.

\subsubsection{Instant data consistency with pH System}
Given $L>0$ and $\mu>0$, determine necessary and sufficient algebraic conditions for $(\mc M, G)$ to be instantly consistent with pH systems in the classes $\mc H_{L}^+$ and $\mc H_{\mu, L}^+$.

\subsubsection{Instant data consistency with full-rank pH System}
Determine necessary and sufficient conditions for the subclass where $J-R$ is nonsingular.

\subsection{Discussion of the problem assumptions}

\subsubsection{Structural assumptions} We assume prior knowledge of the input matrix $G$, which is constant and known. This corresponds to settings where the actuator configuration is known while the internal dynamics are not. For tractability, we restrict attention to pH systems with constant interconnection and damping matrices, yielding a finite-dimensional formulation while preserving the essential geometric structure. Extending the framework to the case of state-dependent input, interconnection and damping matrices is a natural direction for future work. The condition that $J-R$ is full-rank is equivalent to $\ker J \cap \ker R = \{0\}$ meaning that every direction is affected by either interconnection or dissipation (or both), preventing the operator $A=J-R$ from having degenerate null directions. A notable subclass satisfying this condition is given by systems with strictly positive definite damping ($R \succ 0$), which guarantees invertibility of $A$. In pH systems on graphs (e.g., mass–spring–damper systems) \cite{van2013port}, this corresponds to acyclic topologies.

\subsubsection{Noise and derivative estimation} In this preliminary work, we consider noise-free measurements of the state and its derivative, corresponding to a perfect interpolation setting. Handling noisy data requires suitable relaxations of the consistency conditions and is left for future work (see Remark \ref{rem:noise}).  
Noise arises in particular when $\dot{x}_i$ is not directly measured. In this case, it can be estimated using filtering techniques (e.g., low-pass or Savitzky–Golay filters) or numerical differentiation.  
Alternatively, a discrete-time formulation of the pH dynamics can be considered. Following \cite{kotyczka2021symplectic}, an implicit midpoint discretization is given by
\begin{equation}\label{eq:midpoint}
x_{i+1} =
x_i
+ \Delta_t (J-R)\nabla H\!\left(\frac{x_{i+1}+x_i}{2}\right)
+ \Delta_t G u_i,
\end{equation}
where $\Delta_t$ is the sampling time. Defining
$$
\dot{\tilde{x}}_i=\frac{x_{i+1}-x_{i}}{\Delta_t}\,, \quad \tilde x_i=\frac{x_i+x_{i+1}}{2},
$$
we obtain
$$
\dot{\tilde{x}}_i= (J-R)\nabla H(\tilde x_i)+Gu_i\,,
$$
which matches the continuous-time structure. 

\subsubsection{Instant consistency} Instant data-consistency is \emph{pointwise} and purely algebraic, treating $x_i$ and $\dot{x}_i$ as independent. It is therefore a necessary but not sufficient condition for the existence of a trajectory consistent with both the data and the pH structure, as it does not enforce $\dot{x} = \frac{dx}{dt}$. Note that, for instance, the conditions are invariant to temporal ordering: a non-pH system could be misclassified as consistent if its samples admit a permutation satisfying \eqref{eq:pH^{(d)}}.  

We remark, however, that this issue is mitigated when $\dot{x}_i$ is estimated via numerical differentiation (e.g., $\dot{x}_i \approx \frac{x_{i+1}-x_{i-1}}{2\Delta_t}$), which implicitly enforces the state-derivative relationship, at the cost of introducing noise. Alternatively, discrete-time formulations such as \eqref{eq:midpoint} provide exact temporal coupling. In this case, the conditions are exact, although the resulting model may not match the true system if the discretization assumptions are violated.  

More broadly, for sufficiently dense datasets, if $\dot{x}$ is measured and is indeed the true time derivative, we expect the discrepancy between instant and trajectory consistency to vanish, reducing the risk of false positives to highly degenerate cases. Thus, instant consistency remains a robust tool for ruling out model classes fundamentally incompatible with the observed physics. Extending the present consistency framework to explicitly enforce trajectory-level coherence in continuous time is a fundamental direction for future research.


\medskip
\subsection{Interpolation conditions}
To determine necessary and sufficient conditions on the data points to be instantly consistent with a function class $\mc H$, we rely on \textit{interpolation conditions}, a powerful tool 
developed in optimization in the context of Performance Estimation Problems \cite{taylor2017smooth, taylor2017convex, rubbens2023interpolation}. 
Consider a set of indices $I$, a class of functions $\mc H$ and a set of triples 
$$
S=\{(x_i, g_i, h_i)\}_{i \in I}, \qquad x_i\in \R^n\,, \;g_i\in \R^n\,,\; h_i\in \R\,.
$$

The set $S$ is said to be $\mc H$-\textit{interpolable} if and only if there exists a function 
$H \in \mc H$ such that 
$$
g_i = \nabla H(x_i) \,, \quad H(x_i) = h_i, \qquad \forall i \in I.
$$
The following interpolation conditions, adapted from \cite{taylor2017smooth}, characterize when such a function exists. 
\begin{proposition}[Smooth (convex) interpolation] \label{pr:sm_int}
The set $S$ is 
\begin{itemize}
\item $\mathcal{H}_{L}$-interpolable if and only if
    \begin{equation}\label{eq:ic_smooth}
    \begin{aligned}
    h_i - h_j \ge&  -\frac{L}{4} \|x_i - x_j\|^2 +\frac{1}{2} (g_i + g_j)^\top (x_i - x_j) \\&+ \frac{1}{4L} \|g_i - g_j\|^2, \qquad \forall i,j \in I.
    \end{aligned}
    \end{equation}
    \item  $\mc H_{\mu,L}$-interpolable if and only if
\begin{equation}\label{eq:ic_mu_convex}
\begin{aligned}
h_i-h_j
\ge\;
&g_j^\top(x_i-x_j) \\
&+
\frac{1}{2(1-\mu/L)}
\Big(
\frac{1}{L}\|g_i-g_j\|^2
+\mu\|x_i-x_j\|^2 \\
&\qquad
-\frac{\mu}{L}(g_j-g_i)^\top(x_j-x_i)
\Big),
\quad \forall i,j\in I \,.
\end{aligned}
\end{equation}
\end{itemize}
For $\mu=0$, we find that $S$ is $\mathcal{H}_{0,L}$-interpolable if and only if
    \begin{equation}\label{eq:ic_convex}
    h_i- h_j  \ge g_j^\top (x_i - x_j) + \frac{1}{2L} \|g_i - g_j\|^2, \quad \forall i,j \in I.
    \end{equation}\end{proposition}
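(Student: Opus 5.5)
The plan is to reduce every case to the smooth strongly convex interpolation theorem of Taylor, Hendrickx and Glineur \cite{taylor2017smooth}, and to handle the nonconvex smooth class by a curvature shift.

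\emph{Strongly convex case.} For $\mathcal{H}_{\mu,L}$ I would first perform a change of function. The map $H\mapsto \tilde H = H-\frac{\mu}{2}\|\cdot\|^2$ is a bijection from $\mathcal{H}_{\mu,L}$ onto $\mathcal{H}_{0,L-\mu}$, the class of convex $(L-\mu)$-smooth functions. At the data level it sends $(x_i,g_i,h_i)$ to
$$(x_i,\; g_i-\mu x_i,\; h_i-\tfrac{\mu}{2}\|x_i\|^2).$$
Interpolability is preserved in both directions. It therefore suffices to show that a set $\tilde S$ is $\mathcal{H}_{0,\tilde L}$-interpolable if and only if \eqref{eq:ic_convex} holds with $\tilde L$.

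\emph{Convex case, necessity.} Necessity is the standard inequality for convex $L$-smooth functions. One applies the descent lemma to the function $\phi(y)=H(y)-\nabla H(x_j)^\top y$, which is convex, $L$-smooth and minimized at $x_j$. This gives $\phi(x_j)\le\phi(x_i)-\frac{1}{2L}\|\nabla\phi(x_i)\|^2$, which rearranges to \eqref{eq:ic_convex}.

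\emph{Convex case, sufficiency.} This is the main obstacle. I would pass through Fenchel conjugation. A convex function is $L$-smooth exactly when its conjugate is $\frac1L$-strongly convex, so the data $(g_i,x_i,g_i^\top x_i-h_i)$ must be interpolated by a $\frac1L$-strongly convex function. Subtracting $\frac{1}{2L}\|\cdot\|^2$ turns this into plain convex interpolation. Plain convex interpolation holds exactly when the usual subgradient inequalities are satisfied; in that case the piecewise-linear maximum of the affine minorants is a valid interpolant.

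One then checks algebraically that those subgradient inequalities coincide with \eqref{eq:ic_convex}. Finally one conjugates back. This step needs care: the conjugate of the resulting function must be finite and differentiable with $\nabla H(x_i)=g_i$. Both follow because the conjugate of a $\frac1L$-strongly convex function is $L$-smooth and everywhere finite, and because the data points are attained via the Fenchel–Young equality.

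Substituting the shifted data back into \eqref{eq:ic_convex} with $\tilde L=L-\mu$ and expanding should reproduce \eqref{eq:ic_mu_convex}. This is a routine but tedious quadratic identity; setting $\mu=0$ recovers \eqref{eq:ic_convex}.

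\emph{Nonconvex smooth case.} For $\mathcal{H}_L$, note that $H\in\mathcal{H}_L$ implies $H+\frac{L}{2}\|\cdot\|^2$ is convex and $2L$-smooth. Conversely, any convex $2L$-smooth function minus $\frac{L}{2}\|\cdot\|^2$ has a gradient that is $L$-Lipschitz. This holds because the gradient of a convex $2L$-smooth function is cocoercive: $\nabla F/L-\mathrm{Id}$ is nonexpansive by cocoercivity, equivalently by Baillon–Haddad. Hence $\mathcal{H}_L$ is in bijection with $\mathcal{H}_{0,2L}$ through this shift.

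Applying \eqref{eq:ic_convex} with constant $2L$ to $(x_i,\,g_i+Lx_i,\,h_i+\frac{L}{2}\|x_i\|^2)$ and simplifying gives a condition that is not symmetric in $i,j$. Averaging the $(i,j)$ and $(j,i)$ instances yields \eqref{eq:ic_smooth}, but the correspondence must be exact, and I expect this to need checking. If the symmetrized and unsymmetrized conditions are not equivalent, I would instead verify directly that the expanded expression equals \eqref{eq:ic_smooth}, taking the cross terms into account.
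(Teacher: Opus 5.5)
\textbf{The step that fails.} The paper gives no proof of its own; it imports the result from \cite{taylor2017smooth}. Your route is exactly the one used there: convex interpolation by a max of affine minorants, conjugation for $\mathcal{H}_{0,L}$, and curvature shifts for $\mathcal{H}_{\mu,L}$ and $\mathcal{H}_L$. The necessity argument, the conjugation and Fenchel--Young steps, and the bijection $\mathcal{H}_L\leftrightarrow\mathcal{H}_{0,2L}$ via cocoercivity are all sound. (You rightly read $\mathcal{H}_{\mu,L}$ as including $L$-smoothness and $\mu<L$, which the displayed definition omits.) The failure is the strongly convex ``routine but tedious'' identity.

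Write $\Delta x_{ij}:=x_i-x_j$ and $\Delta g_{ij}:=g_i-g_j$, and substitute $(x_i,\,g_i-\mu x_i,\,h_i-\frac{\mu}{2}\|x_i\|^2)$ into \eqref{eq:ic_convex} with constant $L-\mu$. The quadratic $x$-terms on the right combine to $\frac{\mu}{2}\|\Delta x_{ij}\|^2+\frac{\mu^2}{2(L-\mu)}\|\Delta x_{ij}\|^2=\frac{\mu L}{2(L-\mu)}\|\Delta x_{ij}\|^2$. The cross term comes from expanding $\|\Delta g_{ij}-\mu\Delta x_{ij}\|^2$. You obtain
\[
h_i-h_j\ge g_j^\top\Delta x_{ij}+\frac{1}{2(1-\mu/L)}\Big(\frac1L\|\Delta g_{ij}\|^2+\mu\|\Delta x_{ij}\|^2-\frac{2\mu}{L}\,\Delta g_{ij}^\top\Delta x_{ij}\Big),
\]
with a factor $2$ on the cross term that \eqref{eq:ic_mu_convex} lacks (recall $\Delta g_{ij}^\top\Delta x_{ij}=(g_j-g_i)^\top(x_j-x_i)$).

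This is not an artifact of your route: the printed inequality is not even necessary. Take $n=1$, $0<\mu<L$, and $H(x)=\frac{\mu}{2}x^2\in\mathcal{H}_{\mu,L}$, with data $x_i=1$, $x_j=0$, $g_i=\mu$, $g_j=0$, $h_i=\frac{\mu}{2}$, $h_j=0$. The right-hand side of \eqref{eq:ic_mu_convex} equals $\frac{\mu L}{2(L-\mu)}>\frac{\mu}{2}=h_i-h_j$, whereas the factor-$2$ version holds with equality.

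So the second bullet cannot be proved as stated. Your argument establishes the corrected condition above, which is the one in \cite{taylor2017smooth}; say that instead of asserting the expansion reproduces \eqref{eq:ic_mu_convex}. As a consistency check, with the factor $2$, setting $\mu=-L$ returns exactly \eqref{eq:ic_smooth}, matching your shift for $\mathcal{H}_L$; without it, it does not. The same slip propagates to \eqref{eq:phimuL}, whose last term should be $2\mu(B\Delta y_{ij})^\top\Delta x_{ij}$.

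\textbf{A smaller point on $\mathcal{H}_L$.} No symmetrization is needed, and averaging would not work. Adding the $(i,j)$ and $(j,i)$ instances cancels $h_i-h_j$ and only yields $\|g_i-g_j\|\le L\|x_i-x_j\|$.

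The direct expansion already gives \eqref{eq:ic_smooth} exactly. With $\hat g_i=g_i+Lx_i$ and $\hat h_i=h_i+\frac{L}{2}\|x_i\|^2$, three identities do the work:
\[
\frac{L}{2}\big(\|x_i\|^2-\|x_j\|^2\big)-Lx_j^\top\Delta x_{ij}=\frac{L}{2}\|\Delta x_{ij}\|^2,
\]
\[
\frac{1}{4L}\|\Delta g_{ij}+L\Delta x_{ij}\|^2=\frac{1}{4L}\|\Delta g_{ij}\|^2+\frac12\Delta g_{ij}^\top\Delta x_{ij}+\frac{L}{4}\|\Delta x_{ij}\|^2,
\]
\[
g_j^\top\Delta x_{ij}+\frac12\Delta g_{ij}^\top\Delta x_{ij}=\frac12(g_i+g_j)^\top\Delta x_{ij}.
\]
Condition \eqref{eq:ic_smooth} is itself not symmetric in $i,j$. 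Its two instances together form a two-sided bound on $h_i-h_j-\frac12(g_i+g_j)^\top\Delta x_{ij}$, so there is no asymmetry to repair.
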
\medskip
These conditions provide \textit{necessary and sufficient conditions} for the observed data to be instantly consistent with a Hamiltonian $H$ with certain smoothness and convexity properties. 

\section{Main results}\label{sec:main_results}
In this section, we derive and present our main results addressing the objectives outlined in Section \ref{ss:obj}.
Let us first introduce some notation that will prove convenient in the sequel. Given the measurements $\mc M=\{\dot x_i, x_i, u_i\}_{i\in I}$ and the input matrix $G$, define
\begin{equation}\label{eq:y}
Y:=\bma y_1& \dots& y_T\ema, 
\qquad 
y_i := \dot x_{i}-Gu_i \,. 
\end{equation}
Then, the pair $(\mc M, G)$ is instantly consistent with the pH dynamics in \eqref{eq:pH^{(d)}} if and only if
\begin{equation}\label{eq:y_dyn}
y_i = (J-R)\nabla H(x_i), \qquad \forall i\in I.
\end{equation}
If we further denote $\Gamma=[\nabla H(x_1)\,,\dots\,, \nabla H(x_T)]$, we can express \eqref{eq:y_dyn} in the compact form $$Y=(J-R)\Gamma\,.$$
As is standard in port-Hamiltonian systems, the next lemma shows that one can equivalently work with a single matrix having negative semidefinite symmetric part.
\begin{lemma}\label{lemma:eq_lc}
Let $Y\in \R^{n\times T}$ and $\Gamma\in \R^{n\times T}$. Then, there exist matrices $J=-J^\top$ and $R=R^\top\succeq0$ such that
$Y=(J-R)\Gamma$
if and only if there exists a matrix $A$ satisfying
\[
Y=A\Gamma, \qquad A+A^\top\preceq0\,.
\]
\end{lemma}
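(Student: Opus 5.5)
The proof rests on the unique decomposition of a square matrix into its skew-symmetric and symmetric parts; we verify each direction of the equivalence in turn.

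\emph{Necessity.} Suppose $J=-J^\top$ and $R=R^\top\succeq 0$ satisfy $Y=(J-R)\Gamma$. Set $A:=J-R$. Then $Y=A\Gamma$ holds trivially. Moreover,
\[
A+A^\top=(J+J^\top)-(R+R^\top)=-2R\preceq 0 .
\]

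\emph{Sufficiency.} Suppose $A$ satisfies $Y=A\Gamma$ and $A+A^\top\preceq 0$. Define
\[
J:=\tfrac12(A-A^\top),\qquad R:=-\tfrac12(A+A^\top).
\]
By construction $J^\top=-J$, $R^\top=R$, and $R\succeq 0$ because $A+A^\top\preceq 0$. A direct computation gives $J-R=\tfrac12(A-A^\top)+\tfrac12(A+A^\top)=A$, so $Y=(J-R)\Gamma$.

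There is no real obstacle. The only point to note is that the matrix identity $Y=A\Gamma$ is used unchanged in both directions, since $A=J-R$ exactly. Consequently, no property of $\Gamma$ is needed, such as its rank or the fact that its columns are gradients of a Hamiltonian. The lemma is therefore purely linear-algebraic, and the structural constraints on $J$ and $R$ collapse to the single linear matrix inequality $A+A^\top\preceq 0$. This reformulation is what the later SDP conditions will exploit.
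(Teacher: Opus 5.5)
Your proposal is correct and follows the paper's proof essentially verbatim. Like the paper, you take $A:=J-R$ for necessity, and for sufficiency you set $J=\tfrac12(A-A^\top)$ and $R=-\tfrac12(A+A^\top)$, so that $A=J-R$.
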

\medskip
\begin{proof}
$\Rightarrow)$  
If $Y=(J-R)\Gamma$, define $A:=J-R$. Then
$A+A^\top=(J-R)+(J-R)^\top=-2R\preceq0$,
so condition (2) holds.\\
$\Leftarrow)$ 
Assume $Y=A\Gamma$ with $A+A^\top\preceq0$. Define
\[
J=\frac{A-A^\top}{2},\qquad
R=-\frac{A+A^\top}{2}.
\]
Then $J=-J^\top$ and $R=R^\top\succeq0$, and clearly $A=J-R$. Hence $Y=(J-R)\Gamma$.
\end{proof}
\medskip
We now state our general consistency result, giving necessary and sufficient algebraic conditions for the measurements and the input matrix to be instantly consistent with a pH system with an $L$-smooth Hamiltonian and $\mu$-strongly convex $L$-smooth Hamiltonian, respectively.
\medskip
\begin{theorem}[Data-consistency with pH system]
\label{th:feasibility}
Let $\mathcal{M}$ be the set of measurements and $G$ the input matrix, with $Y$ defined as in \eqref{eq:y}. 
The pair $(\mathcal{M}, G)$ is instantly consistent with a pH system in the class $\mathcal{H}_{L}$ if and only if there exists a triple
$$
(A, \{g_i,h_i\}_{i\in I})
$$
with $A \in \mathbb{R}^{n \times n}$, $g_i \in \mathbb{R}^n$, and $h_i \in \mathbb{R}$ such that
\begin{equation}\label{eq:dc_smooth}
\begin{cases} 
Y = A [g_1 \ \dots \ g_T], \\
A + A^\top \preceq 0, \\
(x_i, g_i, h_i)_{i \in I} \text{ satisfy \eqref{eq:ic_smooth}.
}
\end{cases}
\end{equation}
Moreover, any triple $(A, \{g_i, h_i\})$ satisfying \eqref{eq:dc_smooth} admits at least one port-Hamiltonian realization, that is, there exists a pH system instantly consistent with $(\mathcal{M}, G)$ with $J = -J^\top$, $R = R^\top \succeq 0$, and $H \in \mathcal{H}_{L}$,  
satisfying
\be \label{eq:cons}
A = J - R, \quad \nabla H(x_i) = g_i, \quad H(x_i) = h_i+c \quad \forall i \in I\,,
\ee
for some $c$ in $\R$.
The same result holds for the class $\mathcal{H}_{\mu,L}$, $\mu \ge 0$, with \eqref{eq:ic_mu_convex} replacing \eqref{eq:ic_smooth}.
\end{theorem}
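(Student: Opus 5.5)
The plan is to prove the two directions separately. In both, Lemma~\ref{lemma:eq_lc} handles the matrix part and Proposition~\ref{pr:sm_int} handles the Hamiltonian part. The only coupling between the two parts is the identification $g_i=\nabla H(x_i)$, so the argument should reduce to bookkeeping.

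\emph{Necessity.} Suppose $(\mathcal M,G)$ is instantly consistent with a pH system given by $J=-J^\top$, $R=R^\top\succeq0$ and $H\in\mathcal H_L$. I would set $A:=J-R$, $g_i:=\nabla H(x_i)$ and $h_i:=H(x_i)$.
\begin{itemize}
\item By \eqref{eq:y_dyn}, $Y=A[g_1\ \dots\ g_T]$.
\item $A+A^\top=-2R\preceq0$; this is the forward direction of Lemma~\ref{lemma:eq_lc}.
\item The set $\{(x_i,g_i,h_i)\}_{i\in I}$ is $\mathcal H_L$-interpolable by construction, with $H$ itself as interpolant. The ``only if'' part of Proposition~\ref{pr:sm_int} then gives \eqref{eq:ic_smooth}.
\end{itemize}
Hence \eqref{eq:dc_smooth} holds.

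\emph{Sufficiency and realization.} Take any triple $(A,\{g_i,h_i\})$ satisfying \eqref{eq:dc_smooth}.
\begin{itemize}
\item The reverse direction of Lemma~\ref{lemma:eq_lc}, with $\Gamma=[g_1\ \dots\ g_T]$, yields $J=(A-A^\top)/2$ and $R=-(A+A^\top)/2\succeq0$ with $A=J-R$.
\item The ``if'' part of Proposition~\ref{pr:sm_int} yields $\tilde H\in\mathcal H_L$ with $\nabla\tilde H(x_i)=g_i$ and $\tilde H(x_i)=h_i$.
\item Setting $H:=\tilde H+c$, we have $\nabla H(x_i)=g_i$, so $y_i=(J-R)\nabla H(x_i)$ for every $i$. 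This is \eqref{eq:y_dyn}, i.e.\ instant consistency, and \eqref{eq:cons} holds.
\end{itemize}
Two facts make the free constant $c$ harmless. The classes $\mathcal H_L$ and $\mathcal H_{\mu,L}$ are invariant under adding constants. The interpolation inequalities \eqref{eq:ic_smooth}--\eqref{eq:ic_convex} involve the $h_i$ only through the differences $h_i-h_j$, so they are unchanged when every $h_i$ is shifted by the same constant. For the class $\mathcal H_{\mu,L}$ the identical argument applies with \eqref{eq:ic_mu_convex} in place of \eqref{eq:ic_smooth}.

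\emph{Main obstacle.} The delicate point is the nonnegativity built into the definition of instant consistency, i.e.\ reconciling the result with the classes $\mathcal H^+_L$ and $\mathcal H^+_{\mu,L}$. This is exactly what the constant $c$ is for: one chooses $c=-\inf_x\tilde H(x)$ whenever this infimum is finite.
\begin{itemize}
\item For $\mu>0$ this is immediate: a strongly convex $L$-smooth function attains its minimum.
\item For $\mu=0$, and especially for the merely $L$-smooth class, the particular interpolant produced by Proposition~\ref{pr:sm_int} need not be bounded below.
\end{itemize}
My first attempt would be to inspect the explicit construction in \cite{taylor2017smooth}. There, the interpolant is built as an infimal convolution or an envelope of quadratics through the data points, and I would try to show it is bounded below. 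Failing that, I would modify it outside a large ball containing the $x_i$ so that it becomes bounded below, while preserving $L$-smoothness and the values and gradients at the data points. If that modification fails in the nonconvex smooth case, I would state the nonnegative version only up to the constant $c$, as \eqref{eq:cons} already allows.
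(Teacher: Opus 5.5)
Your proof is the paper's proof. Necessity sets $A=J-R$, $g_i=\nabla H(x_i)$, $h_i=H(x_i)$ and uses the forward directions of Lemma~\ref{lemma:eq_lc} and Proposition~\ref{pr:sm_int}. Sufficiency uses the reverse directions, with the constant $c$ used to reach nonnegativity. Where you differ is that you do not take the last step for granted, and you are right not to. The paper simply asserts that some $c>0$ makes $H+c$ nonnegative. That tacitly requires the interpolant to be bounded below, and a general element of $\mathcal H_L$ (e.g.\ $-\tfrac L2\|x\|^2$) is not.

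Your pessimism (``need not be bounded below'') is unwarranted for the natural interpolant, so your first attempt succeeds. The fallback can be dropped; it would not deliver the nonnegativity the definition demands anyway.

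\emph{The convex case $\mathcal H_{0,L}$.} One valid interpolant is $H=\psi^*$ with $\psi(s)=\max_i\big[g_i^\top x_i-h_i+x_i^\top(s-g_i)+\tfrac1{2L}\|s-g_i\|^2\big]$.
Condition \eqref{eq:ic_convex} says exactly that $\psi(g_j)=g_j^\top x_j-h_j$ with the $j$-th piece active, so $x_j\in\partial\psi(g_j)$. Since $\psi$ is $\tfrac1L$-strongly convex, $\psi^*$ is convex and $L$-smooth, with $\nabla\psi^*(x_j)=g_j$ and $\psi^*(x_j)=h_j$. Because $\psi$ is finite everywhere, $H(x)\ge-\psi(0)$ for all $x$.

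\emph{The smooth case $\mathcal H_L$.} Use $H\in\mathcal H_L\iff F:=H+\tfrac L2\|\cdot\|^2$ is convex and $2L$-smooth. Condition \eqref{eq:ic_smooth} is precisely the convex $2L$-smooth interpolation condition for the data $(x_i,\,g_i+Lx_i,\,h_i+\tfrac L2\|x_i\|^2)$.
The same construction gives $F=\psi^*$ with $\psi(s)\le\tfrac1{4L}\|s\|^2+D\|s\|+C$ for some constants $C,D$. Evaluating the supremum defining $\psi^*$ at $s=2Lx$ yields $F(x)\ge L\|x\|^2-2LD\|x\|-C$. Hence $H(x)\ge\tfrac L2\|x\|^2-2LD\|x\|-C$, which is bounded below.

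Thus $c=-\inf H$ works in every class covered by the theorem.
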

\medskip
\begin{proof}
We will prove the statement only for $\mc H=\mc H_{L}$ as the proof for $\mc H_{\mu, L}$ is analogous. By definition, the pair $(\mc M, G)$ is instantly consistent with a pH system in the class $\mc H_{L}$ if and only if there exist a Hamiltonian $H$ in $\mc H_{L}$ and matrices $J=-J^\top$, $R\succeq 0$ such that \eqref{eq:pH^{(d)}}
is satisfied for all $\{\dot x_i, x_i, u_i\}_{i\in I}$, or equivalently, such that \eqref{eq:y_dyn} is satisfied for all $\{x_i, y_i\}_{i\in I}$.   
Let
$$g_i=\nabla H(x_i),\qquad
\Gamma=[g_1,\dots,g_T]\,.$$
Then $Y=(J-R)\Gamma$ with $J=-J^\top$, $R\succeq 0$.  By Lemma \ref{lemma:eq_lc}, this is equivalent to the condition $Y=A\Gamma$ with $A+A^\top\preceq0$. Finally, by Proposition \ref{pr:sm_int}, there exists $H$ in $\mc H_{L}$ such that $\nabla H(x_i)=g_i$ for all $i$ if and only if there exists $\{h_i\}_I$ such that \eqref{eq:ic_smooth} are satisfied for  $\{x_i, g_i, h_i\}_I$. Combining these conditions gives the necessary and sufficient conditions in \eqref{eq:dc_smooth}. Note that if such $H$ is not nonnegative, there exists a $c>0$ such that $h^{(d)}(x)=H(x)+c$ satisfies the feasibility problem \eqref{eq:dc_smooth} and is nonnegative.
\end{proof}
\begin{remark}[Interpretation of the decision variables]
The variables in Theorem \ref{th:feasibility} admit a natural interpretation. The vectors $g_i$ represent candidate gradients $\nabla H(x_i)$ and $h_i$ the corresponding energy values, while the matrix $A = J - R$ encodes the interconnection and dissipation structure, with $A + A^\top \preceq 0$. From this perspective, the feasibility problem~\eqref{eq:dc_smooth} enforces three coupled requirements: (i) the existence of a linear operator $A$ satisfying the interconnection and dissipation structure of port-Hamiltonian systems, (ii) the existence of gradients $g_i$ that are consistent with a smooth (convex) Hamiltonian through interpolation conditions, and (iii) instant consistency of the data with the model through the relation $y_i = A g_i$ for all $i \in I$.
\end{remark}

For $\mu=0$, we find necessary and sufficient conditions for the measurements and the input matrix to be instantly consistent with a pH system with a convex $L$-smooth Hamiltonian $H$. 
\begin{corollary}[Data-consistency with convex pH system]
    For $L>0$, the pair $(\mc M, G)$ is instantly consistent with the pH system in the class $\mc H_{0,L}$ of convex $L$-smooth Hamiltonians if and only if there exists a triple $
    (A, \{g_i,h_i\}_{i\in I})$
    with $A \in \mathbb{R}^{n \times n}$, $g_i \in \mathbb{R}^n$, $h_i \in \mathbb{R}$ such that
\be\label{eq:dc_convex2}
\begin{cases} 
Y= A [ g_1\,, \dots\,, g_T] \\
A+A^\top \preceq 0\\
h_i-h_j
\ge\;
g_j^\top(x_i-x_j)+
\frac{1}{2L}\|g_i-g_j\|^2,
\quad \forall  i\neq j \,.
\end{cases}
\ee
\end{corollary}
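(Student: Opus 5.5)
The corollary is the special case $\mu=0$ of Theorem~\ref{th:feasibility}, so the plan is to apply that theorem with $\mathcal{H}_{0,L}$ and rewrite the interpolation conditions. By Theorem~\ref{th:feasibility}, applied with $\mathcal{H}_{\mu,L}$ and $\mu=0$, the pair $(\mathcal M,G)$ is instantly consistent with a pH system in $\mathcal{H}_{0,L}$ if and only if there exist $A$ and $\{g_i,h_i\}_{i\in I}$ satisfying three conditions: $Y=A[g_1,\dots,g_T]$, $A+A^\top\preceq 0$, and \eqref{eq:ic_mu_convex} with $\mu=0$ for all $i,j\in I$. The first two conditions coincide with those in \eqref{eq:dc_convex2}. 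It therefore remains to show that the third block is equivalent to the last line of \eqref{eq:dc_convex2}.

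First, I substitute $\mu=0$ into \eqref{eq:ic_mu_convex}. The prefactor becomes $\tfrac{1}{2(1-0)}=\tfrac12$. The terms $\mu\|x_i-x_j\|^2$ and $-\tfrac{\mu}{L}(g_j-g_i)^\top(x_j-x_i)$ vanish. What is left is exactly $h_i-h_j\ge g_j^\top(x_i-x_j)+\tfrac{1}{2L}\|g_i-g_j\|^2$, which is \eqref{eq:ic_convex}, as already noted after Proposition~\ref{pr:sm_int}.

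Second, I handle the index range. Proposition~\ref{pr:sm_int} requires the inequalities for all $i,j\in I$, whereas \eqref{eq:dc_convex2} states them only for $i\neq j$. For $i=j$ both sides of \eqref{eq:ic_convex} are zero, so the diagonal inequalities hold trivially and may be dropped. Hence the two families of constraints are equivalent.

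Third, I address nonnegativity, since the definition of instant consistency uses a nonnegative class. Here I repeat the constant-shift argument at the end of the proof of Theorem~\ref{th:feasibility}. Adding a constant $c$ to $H$ preserves membership in $\mathcal{H}_{0,L}$, because convexity and $L$-smoothness depend only on $\nabla H$ and on differences of $H$. For a convex $L$-smooth $H$ that is bounded below, choosing $c\ge -\inf H$ gives an element of $\mathcal{H}^+_{0,L}$ with the same gradients, so the realization \eqref{eq:cons} still holds.

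The only delicate point is this shift when a convex $H$ is not bounded below, for example a nonconstant linear function. If the paper's reading is the one in the statement, consistency with the class $\mathcal{H}_{0,L}$ itself, then no shift is needed. Otherwise, I would follow the proof of Theorem~\ref{th:feasibility} and take the interpolant from \cite{taylor2017smooth}, defined as a maximum of finitely many quadratics; I would not attempt a sharper argument. Apart from this, the proof is a direct specialization with no real obstacle.
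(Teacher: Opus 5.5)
Your proposal is correct and follows the paper's route: the corollary is Theorem~\ref{th:feasibility} for $\mathcal{H}_{\mu,L}$ at $\mu=0$, where \eqref{eq:ic_mu_convex} reduces to \eqref{eq:ic_convex} and the $i=j$ constraints hold trivially. The nonnegativity issue you flag is real, since the paper's constant-shift remark tacitly assumes $H$ is bounded below, but it closes easily once you correct your description of the interpolant. In \cite{taylor2017smooth} the $\mathcal{H}_{0,L}$ interpolant is not itself a maximum of quadratics; it is the conjugate $H=\phi^*$ of $\phi(s)=\max_i\{x_i^\top s-h_i+\frac{1}{2L}\|s-g_i\|^2\}$. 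Hence $\inf H=-\phi(0)=\min_i\{h_i-\frac{1}{2L}\|g_i\|^2\}>-\infty$, and a finite shift $c$ yields an element of $\mathcal{H}^+_{0,L}$.
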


The feasibility problems \eqref{eq:dc_smooth} and the corresponding one for $\mc H_{\mu, L}$ provide a complete characterization of all pH models instantly consistent with  $(\mc M, G )$ and the class $\mc H$. However, they are nonconvex quadratically constrained quadratic programs (QCQP), as they are bilinear in $A$ and $\Gamma$ due to the constraint $Y = A\Gamma$,
which couples the unknown matrix $A$ and the gradients $\Gamma=[g_1,\dots,g_T]$. 

\begin{remark}[Parametrization of $A$]
In many practical applications, $J$ and $R$ are known in parametric form. Note that, if parametric sets $\mc J_\theta$ and $\mc R_\theta$ are known, this can be easily added in the feasibility program as a further constraint. In particular, if 
$\mc J_\theta$ and  $\mc R_\theta$ are such that all matrices in $\mc J_\theta$ are skew-symmetric and $\mc R_\theta$ are positive semidefinite, then the feasibility program in \eqref{eq:dc_smooth} reduces to
\be
\begin{cases} 
Y= A_\theta[g_{1}, \dotsm g_{T}]\,, \\
A_\theta=J_\theta-R_\theta, \quad J_\theta\in \mc J_\theta\,,\, R_\theta\in \mc R_{\theta}\\
 \{x_i, g_i, h_i\} \text{ satisfy }  \eqref{eq:ic_smooth} \textit{ (resp. \eqref{eq:ic_mu_convex})}
\end{cases}
\ee  
If the parametrization is linear in $\theta$, this becomes a convex QCQP regardless of the rank of $J-R$.
\end{remark}

On the other hand, the subproblem of whether there exists a \emph{full-rank} pH system instantly consistent with the data admits a convex SDP reformulation,  as proved in following result. The key idea is to eliminate the bilinearity by expressing gradients explicitly via the inverse operator


\begin{theorem}[SDP formulation for full-rank pH system]\label{th:feasibility2}
Let $\mathcal{M}$ be the set of measurements and $G$ the input matrix, with $Y$ defined as in \eqref{eq:y} and $$
\Delta x_{ij} := x_i - x_j\,,\;
\Delta y_{ij} := y_i - y_j\,.
$$ 
The pair $(\mathcal{M}, G)$ is instantly consistent with a \emph{full-rank} pH system in the class $\mathcal{H}_{L}$
 if and only if there exists a pair $$(B, \{ h_i\}_{i\in I})$$ with $B$ in $\R^{n\times n}$ and $h_i$ in $\R$ satisfying
\begin{equation}\label{eq:dc_smooth_sdp}
\begin{cases}
B + B^\top \prec 0\,, \\
\begin{bmatrix}
\phi_{ij}
& (B \Delta y_{ij})^\top \\
B \Delta y_{ij}& I
\end{bmatrix}
\succeq 0\,,
\end{cases}
\end{equation}
where 
\be\label{eq:phiL}
\phi_{ij}:=4L\big(h_i-h_j
- \frac1{2} (B(y_i+y_j))^\top \Delta x_{ij}
+ \tfrac{L}{4}\|\Delta x_{ij}\|^2\big)\,.
\ee
The same result holds for the class $\mathcal{H}_{\mu,L}$, $\mu \ge 0$, with
\be\label{eq:phimuL}
\begin{aligned}
\phi_{ij}:=&2(L-\mu)(h_i-h_j-(By_j)^\top \Delta x_{ij})-L\mu \|\Delta x_{ij}\|^2\\&+\mu (B \Delta y_{ij})^\top \Delta x_{ij}   
\end{aligned}
\ee
Moreover, any triple $(A, \{g_i, h_i\}_{i\in I})=(B^{-1}, \{By_i, h_i\}_{i\in I})$ with $(B, \{h_i\}_{i\in I})$ satisfying \eqref{eq:dc_smooth_sdp} admits at least one port-Hamiltonian realization satisfying \eqref{eq:cons}. 
\end{theorem}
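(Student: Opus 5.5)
We reduce the statement to Theorem~\ref{th:feasibility}. The reduction uses the change of variables $B:=A^{-1}$, $g_i=By_i$, which removes the bilinear coupling $Y=A\Gamma$, and then a Schur complement to turn the quadratic interpolation inequalities into linear matrix inequalities. For necessity, assume $(\mathcal M,G)$ is instantly consistent with a full-rank pH system. Theorem~\ref{th:feasibility} then yields $(A,\{g_i,h_i\})$ satisfying \eqref{eq:dc_smooth}, and full rank gives that $A=J-R$ is invertible. Set $B=A^{-1}$, so that $g_i=By_i$.

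The first step is to show that $B+B^\top\prec0$ when $A+A^\top\preceq 0$ and $A$ is invertible. The identity $B+B^\top=B(A+A^\top)B^\top$ immediately gives $B+B^\top\preceq0$. Strictness does not follow from this identity alone: for instance, $A=J$ skew and invertible gives $B$ skew, so $B+B^\top=0$. This is the main obstacle, because the theorem as stated uses a strict inequality. I would handle it in one of two ways. The first option is to read the strict inequality as $\preceq$, which is exactly what is needed for equivalence with $\ker J\cap\ker R=\{0\}$. The second option is to argue that invertibility of $B$ is the real requirement and that $B+B^\top\preceq0$ together with $\det B\neq0$ is what the sufficiency direction uses. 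Either way I would state clearly which of the two is meant. For sufficiency, given $B$ with $B+B^\top\preceq0$ and $B$ invertible, $A:=B^{-1}$ satisfies $A+A^\top=A(B+B^\top)A^\top\preceq0$ by the same congruence. If the strict version is kept, $B$ is automatically invertible, since $Bv=0$ implies $v^\top(B+B^\top)v=0$ and hence $v=0$.

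The second step substitutes $g_i=By_i$ into the interpolation inequality \eqref{eq:ic_smooth}. Multiplying through by $4L$ gives
\[
4L\Big(h_i-h_j+\tfrac L4\|\Delta x_{ij}\|^2-\tfrac12 (B(y_i+y_j))^\top\Delta x_{ij}\Big)-\|B\Delta y_{ij}\|^2\ge 0,
\]
that is, $\phi_{ij}-\|B\Delta y_{ij}\|^2\ge0$. This expression is linear in $(B,h)$ except for the last term. By the Schur complement with respect to the identity block, the inequality is equivalent to the $2\times2$ block LMI in \eqref{eq:dc_smooth_sdp}.

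The same computation handles $\mathcal H_{\mu,L}$: multiply \eqref{eq:ic_mu_convex} by $2L(1-\mu/L)=2(L-\mu)$, which is positive when $\mu<L$. Collecting terms gives $\phi_{ij}$ as in \eqref{eq:phimuL}, with the sign of the cross term adjusted via $(g_j-g_i)^\top(x_j-x_i)=(B\Delta y_{ij})^\top\Delta x_{ij}$, minus $\|B\Delta y_{ij}\|^2$. The Schur complement then applies in the same way.

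For the final claim, take any feasible $(B,\{h_i\})$ and set $A=B^{-1}$ and $g_i=By_i$. Then $Y=A\Gamma$ holds, $A+A^\top\preceq0$ by the first step, and the interpolation conditions hold by reversing the second step. Theorem~\ref{th:feasibility} therefore provides the pH realization satisfying \eqref{eq:cons}, including the constant shift $c$ that ensures nonnegativity. The resulting realization is full rank because $A$ is invertible.
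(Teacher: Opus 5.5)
Your computations match the paper's. You reduce to Theorem~\ref{th:feasibility}, set $B=A^{-1}$ so that $g_i=By_i$, and turn each interpolation inequality into an LMI by a Schur complement; your scaling by $2(L-\mu)$ reproduces \eqref{eq:phimuL}. The sufficiency direction and the realization claim are fine. The step you flag is a real problem, and the paper's proof does not resolve it. It simply \emph{imposes} $A+A^\top\prec0$ as its encoding of full rank, so what it actually proves is the equivalence for the subclass $R\succ0$, not for $\ker J\cap\ker R=\{0\}$.

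Your example $A=J$ only shows that the particular choice $B=A^{-1}$ fails strictness, not that every feasible $B$ does. For $\mathcal H_L$ the stated equivalence does hold. Data consistent with a full-rank pH system never contain two samples with equal $x$ and different $y$, and then $B=-\epsilon I$, $h_i\equiv0$ is feasible for small $\epsilon>0$.

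For $\mu>0$, however, the equivalence fails. Take $n=2$, $0<\mu<L$, $J=\begin{bmatrix}0&1\\-1&0\end{bmatrix}$, $R=0$, $H=\tfrac12(\mu x_1^2+Lx_2^2)$. Use two samples with $u_i=0$ at $x_1=0$ and $x_2=(\sqrt L,\sqrt\mu)^\top$, so that $\Delta y=J\nabla H(x_2)=\sqrt{L\mu}\,(\sqrt L,-\sqrt\mu)^\top$. Suppose some $B$ and $H'\in\mathcal H_{\mu,L}$ satisfy $\nabla H'(x_i)=By_i$. Then $g=B\Delta y$ satisfies the standard two-point inequality $g^\top\Delta x\ge\frac{\mu L}{\mu+L}\|\Delta x\|^2+\frac1{\mu+L}\|g\|^2$. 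Equivalently, $g$ lies in the disc $\|g-\tfrac{L+\mu}2\Delta x\|\le\tfrac{L-\mu}2\|\Delta x\|$. This disc touches the line $\Delta y^\top g=0$ at $\nabla H(x_2)$ and lies entirely in $\{\Delta y^\top g\ge0\}$. On the other hand, $B+B^\top\prec0$ forces $\Delta y^\top B\Delta y<0$. So these data are consistent with a full-rank pH system but with no pH system having $R\succ0$. Yet any feasible point of the strict program yields such a realization, via $A+A^\top=A(B+B^\top)A^\top\prec0$. The stated ``only if'' is therefore false, and no argument can close that step.

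Neither of your repairs recovers an SDP characterization. Your first option is incorrect: $B+B^\top\preceq0$ does not encode $\ker J\cap\ker R=\{0\}$; invertibility of $B$ does. Without invertibility the sufficiency direction collapses. For $\mathcal H_L$ and $\mathcal H_{0,L}$, the pair $B=0$, $h_i\equiv0$ satisfies the relaxed constraints for every dataset. For instance, in $\mathcal H_{0,L}$ take $n=1$ and samples $(x_i,y_i)=(0,0),(1,1)$; these are consistent with no pH system having a convex Hamiltonian. Your second option is a correct equivalence, but $\det B\neq0$ is a nonconvex constraint, so the result is no longer an SDP.

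The repair that keeps the SDP is the one the paper's proof uses implicitly. Keep the strict inequality, and read ``full rank'' as $R\succ0$, i.e.\ $A+A^\top\prec0$. Then necessity follows because $A$ is invertible and $B+B^\top=B(A+A^\top)B^\top\prec0$, and the rest of your argument goes through unchanged.
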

\medskip
\begin{proof}
We will prove the statement only for $\mc H=\mc H_{L}$ as the proof for $\mc H_{\mu, L}$ is analogous. By Theorem \ref{th:feasibility}, the pair $(\mathcal{M}, G)$ is instantly consistent with a port-Hamiltonian system in $\mathcal{H}_{L}$ if and only if the feasibility problem in \eqref{eq:dc_smooth} admits at least one solution. To impose that this solution is full rank, we further impose $A+A^\top \prec 0$. Since the symmetric part of $A$ is strictly negative definite, $A$ is nonsingular. We can therefore define its inverse $B = A^{-1}$. Multiplying the consistency equation $Y = A\Gamma$ by $B$ from the left yields the explicit expression for the gradients:
\begin{equation}
\Gamma = BY \implies g_i = By_i, \quad \forall i \in I.
\end{equation}
Substituting $g_i = By_i$ and $g_j = By_j$ into the $L$-smooth interpolation condition \eqref{eq:ic_smooth}, gives
 that the condition simplifies to:
\begin{equation}
\phi_{ij} \ge (B\Delta y_{ij})^\top I (B\Delta y_{ij}).
\end{equation}
with $\phi_{ij}$ as in \eqref{eq:phiL}.
The inequality above is a quadratic constraint in the variable $B$. Since $I \succ 0$, we apply the Schur complement to obtain the equivalent convex reformulation
\begin{equation}
\begin{bmatrix} \phi_{ij} & (B\Delta y_{ij})^\top \\ B\Delta y_{ij} & I \end{bmatrix} \succeq 0, \quad \forall i \neq j.
\end{equation}
Finally, since $A+A^\top   \prec 0$ and $B=A^{-1}$, we find the equivalent condition $B+B^\top  \prec 0$. This completes the proof.
\end{proof}
\begin{algorithm}
\caption{Full-rank \textit{(convex)} pH instant data-consistency}
\label{alg:sdp}
\begin{algorithmic}[1]
\Require Measurements $\mathcal{M}=\{x_i,\dot x_i,u_i\}_{i\in I}$, input matrix $G$, smoothness parameter $L$, (\textit{convexity parameter} $\mu$)
\Ensure Feasible / Infeasible, (optionally) $(A,\{g_i,h_i\}_{i\in I})$

\State Compute $y_i = \dot x_i - G u_i$, $\forall i \in I$
\State Form differences $\Delta x_{ij} = x_i - x_j$, $\Delta y_{ij} = y_i - y_j$

\State Solve the SDP:
\[
\text{find } B \in \mathbb{R}^{n\times n}, \; \{h_i\}_{i\in I}, h_i \in \mathbb{R}
\]
such that
\[
B + B^\top \prec 0
\]
and for all $i \neq j$
\[
\begin{bmatrix}
\phi_{ij} & (B \Delta y_{ij})^\top \\
B \Delta y_{ij} & I
\end{bmatrix}
\succeq 0
\]
where
\[
\phi_{ij} = 4L\Big(h_i - h_j 
- \tfrac{1}{2}(B(y_i+y_j))^\top \Delta x_{ij}
+ \tfrac{L}{4}\|\Delta x_{ij}\|^2\Big)
\]
\textit{($\phi_{ij}$ as in \eqref{eq:phimuL} if $\mu$ is provided)}
\medskip

\If{SDP is feasible}
    \State Recover $g_i = B y_i$
    \State Set $A = B^{-1}$
    \State \Return \textbf{Feasible}, $(A,\{g_i,h_i\}_{i\in I})$
\Else
    \State \Return \textbf{Infeasible}
\EndIf
\end{algorithmic}
\end{algorithm}
Theorem \ref{th:feasibility2} provides necessary and sufficient algebraic conditions for the measurements and the input matrix to be instantly consistent with a \emph{full-rank} pH system with an $L$-smooth Hamiltonian and $\mu$-strongly convex $L$-smooth Hamiltonian, respectively.
These feasibility problems can be translated into SDP programs as shown in Algorithm \ref{alg:sdp}. 

\begin{remark}[Noise relaxation via slack variables]\label{rem:noise}
The perfect interpolation setting can be relaxed to handle noisy measurements by introducing slack variables into the constraints. Specifically, one can introduce a variable $V \in \mathbb{R}^{n \times T}$ (with columns $v_i \in \mathbb{R}^n$) to relax the data consistency constraint to $g_i = By_i + v_i$, which accounts for noise in the state derivatives or input measurements. Additionally, a second slack variable $E \in \mathbb{R}^{T \times T}$ 
can be added to $\Phi_{ij}$ in \eqref{eq:phiL} (resp. \eqref{eq:phimuL}) to relax the interpolation inequalities  
and the requirements on smoothness or convexity as a consequence. This allows for a robust consistency check where one minimizes a weighted sum of the residuals to find the best-fit port-Hamiltonian structure.
\end{remark}

\begin{remark}[Computational complexity]
The feasibility problems presented in Theorem 2 and Algorithm 1 are formulated as convex semidefinite programs (SDPs). 
The computational complexity is dominated by the $O(T^2)$ linear matrix inequalities (LMIs) required to enforce the interpolation conditions across all pairs of data points. For datasets of moderate size (e.g., $T \approx 200$), the problem remains numerically tractable on standard hardware.
The addition of the slack variables mentioned in Remark \ref{rem:noise} preserves the convexity of the problem and does not impact the computational complexity. 
\end{remark}
\section{Numerical simulations}\label{sec:numerical_sim}
In this section, we evaluate Algorithm~1 (full-rank (convex) pH instant data-consistency) 
on two representative nonlinear dynamical systems: a mass-spring-damper system admitting a pH representation with \emph{convex} Hamiltonian and a nonlinear electrostatic microactuator with \emph{non-convex} Hamiltonian. The objective is to assess (i) the ability of the framework to correctly identify port-Hamiltonian structure from data, (ii) the sensitivity of the feasibility conditions to smoothness and convexity parameters, and (iii) the impact of derivative estimation, instant consistency and sampling.
\begin{figure}
\centering
\includegraphics[width=0.5\textwidth]{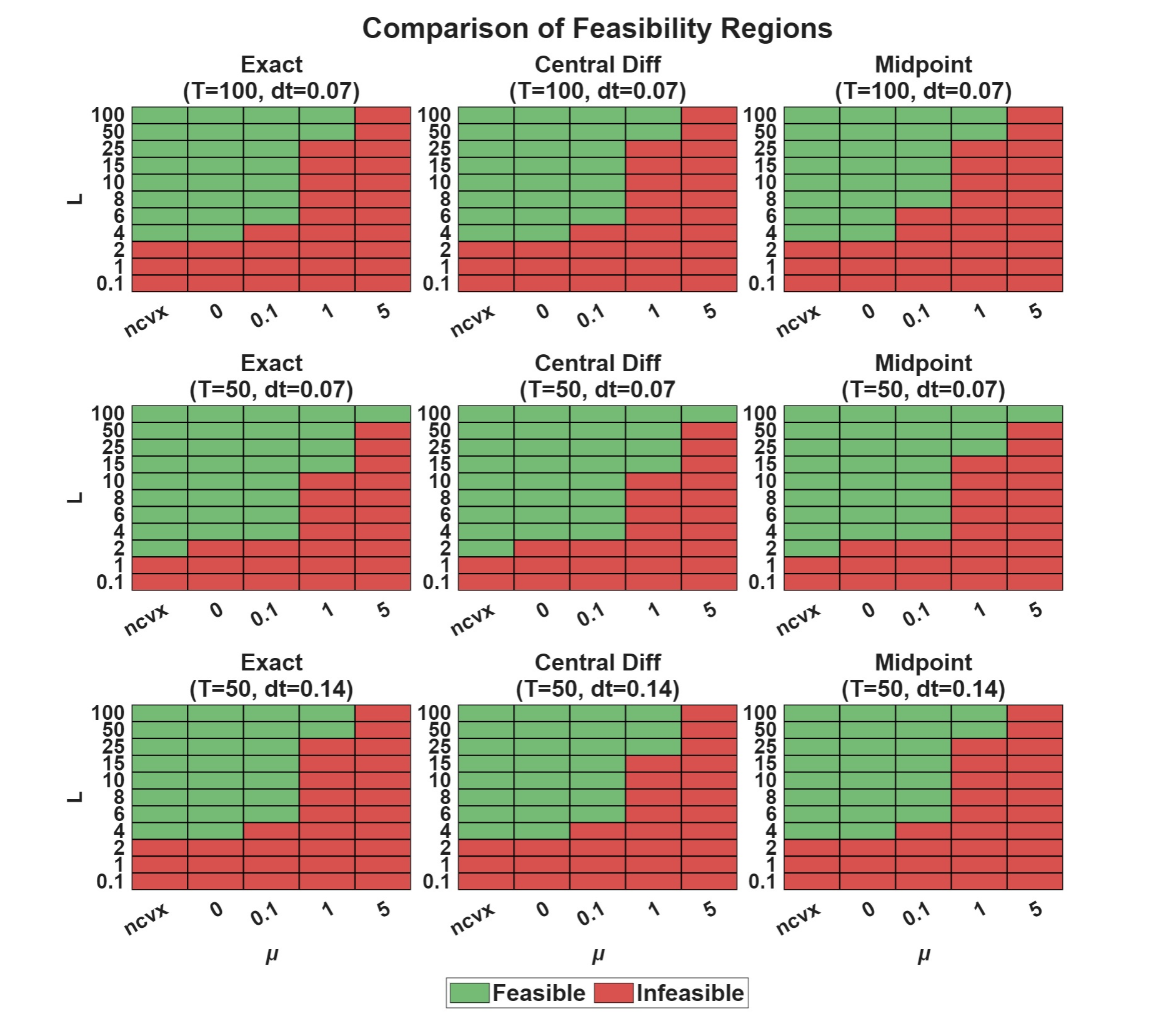}
\caption{Feasibility regions of Algorithm 1 for the convex mass–spring–damper system (Example \ref{ex:1}), showing correct identification of the underlying pH structure across different smoothness and convexity parameters.}
\label{fig:1}
\end{figure}
\begin{example}[Mass–Spring–Damper System]\label{ex:1}
We consider a nonlinear mass–spring–damper system with state $x = [q\; p]^\top$, where $q$ is the position and $p = m\dot q$ is the momentum. The dynamics is given by
\[
\dot q = \frac{p}{m}, \qquad
\dot p = -k q - \alpha q^3 - \frac{b}{m}p + u\,,
\]
with $m=1$, $k=1$, $\alpha=1$ and $b=0.5$.
The system admits a pH representation with a \emph{strongly convex Hamiltonian}
\begin{equation}
H(q,p) = \frac{p^2}{2m} + \frac{1}{2}k q^2 + \frac{1}{4}\alpha q^4,
\end{equation}
and structure matrices
$$
J = \bma 0 & 1 \\ -1 & 0 \ema,\quad
R = \bma 0 & 0 \\ 0 & b \ema ,\quad
G = \bma  0 \\ 1 \ema\,.
$$ This example serves as a benchmark where the pH structure is well-conditioned.
\end{example}

\begin{example}[Nonlinear Electrostatic Microactuator]\label{ex:2}
Following \cite{BeckersCDC2023}, we consider an actuator consisting of a movable plate with mass $m=1$, a nonlinear spring with position-dependent stiffness $k(x_1)$, a capacitive electrostatic actuator with input voltage $u$ and a linear damping $b=0.5$.
We define the state vector
\[
x = 
\begin{bmatrix}
x_1 \\ x_2 \\ x_3
\end{bmatrix}
=
\begin{bmatrix}
\text{air gap} \\ \text{momentum} \\ \text{charge}
\end{bmatrix}.
\]
The dynamics can be expressed as a pH system as in \eqref{eq:pH^{(d)}} 
with a \emph{non-convex} Hamiltonian: 
\begin{equation}
H(x) = \frac{1}{4} (x_1 - x^*_1)^4 + \frac{1}{2} m x_2^2 + \frac{x_3^2 x_1}{2 A \varepsilon}\,,
\end{equation}
where $x_1^* = 1$ is the nominal equilibrium air gap, $A = 1$ is the plate area, and $\varepsilon = 1$ is the permittivity in the gap (we refer to \cite{maithripala2003nonlinear,beckers2023data} for further details). The interconnection and damping matrices are:
\[
J = 
\begin{bmatrix}
0 & 1 & 0 \\
-1 & 0 & 0 \\
0 & 0 & 0
\end{bmatrix}, \quad
R = 
\begin{bmatrix}
0 & 0 & 0 \\
0 & b & 0 \\
0 & 0 & \frac{1}{r}
\end{bmatrix}, \quad
G = 
\begin{bmatrix}
0 \\ 0 \\ \frac{1}{r}
\end{bmatrix},
\]
where $r = 1$ is the input resistance.

\end{example}
\begin{figure}
\centering
\includegraphics[width=0.5\textwidth]{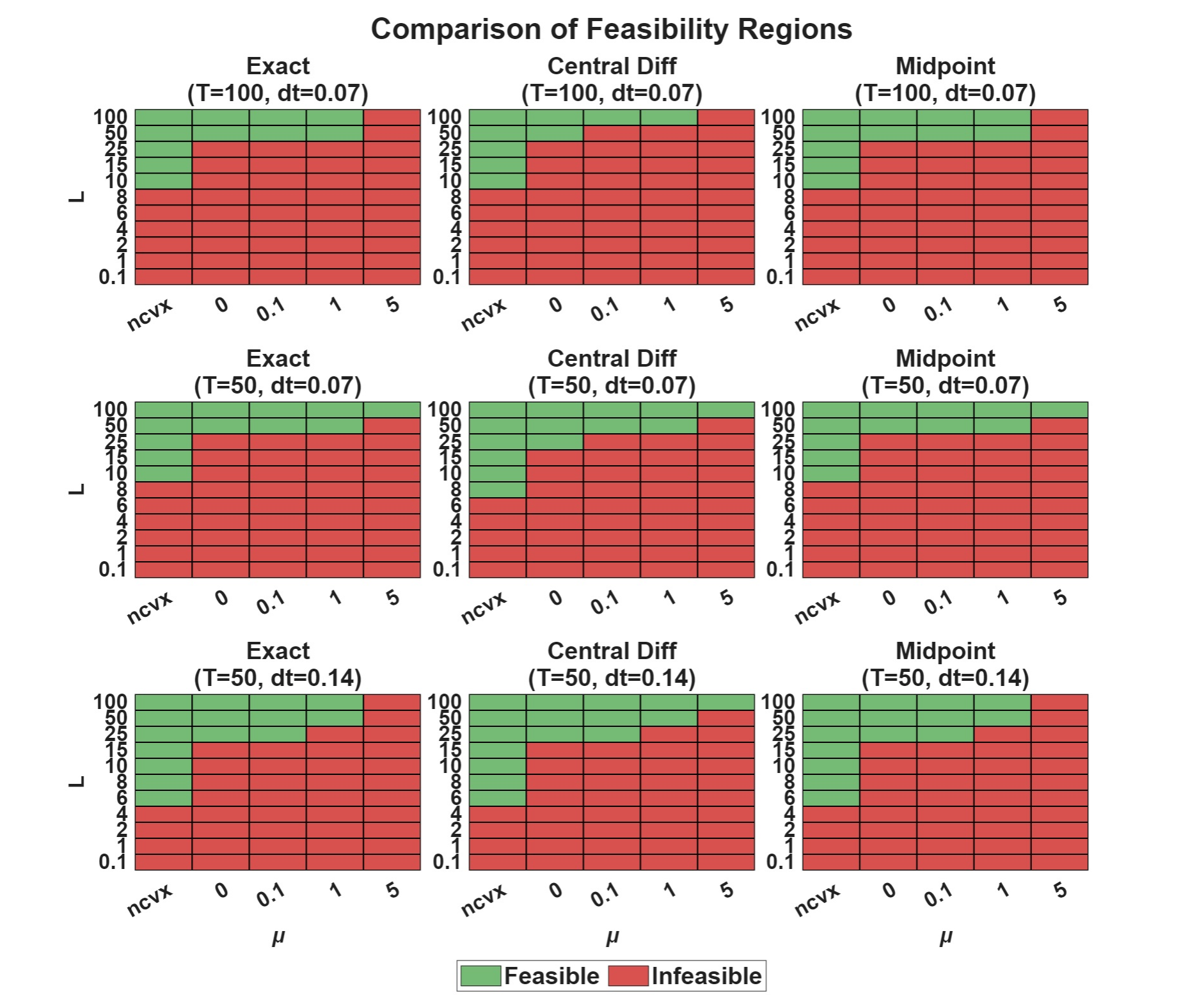}
\caption{Feasibility regions of Algorithm 1 for the non-convex electrostatic microactuator (Example \ref{ex:2}), illustrating rejection of convex pH structure for small smoothness bounds and the emergence of false feasibility under overly relaxed smoothness constraints.}
\label{fig:2}
\end{figure}

First, we collect a set of training data for both the systems in Example \ref{ex:1} and \ref{ex:2}. For this purpose, we use a sinusoidal input signal, i.e., $u(t) = \sin(t)$. The systems are initialized with $x(0) = [0,1]^\top$ and $x(0) = [0,0,1]^\top$, respectively, and 100 data pairs $\{t_i, x(t_i)\}$ are recorded between $0\,\mathrm{ms}$ and $t_\text{span}\,\mathrm{ms}$ 
with constant time spacing $dt=\frac{t_\text{span}}{T}$.  
Specifically, to investigate the effect of number of data-points and sampling density, we test $T=100$ with $dt=0.06$, $T=50$ with $dt=0.06$ and $T=100$ with $dt=0.14$. Furthermore, we evaluate three derivative estimation methods: \emph{exact derivatives} (ground-truth), \emph{central finite differences} ($O(\Delta t^2)$), and the structure-preserving \emph{midpoint} discretization \eqref{eq:midpoint}. 
We then test instant data-consistency with a full-rank pH system (Algorithm \ref{alg:sdp}) 
over a range of smoothness and convexity parameters:
$$
\begin{aligned}
L &\in \{0.1, 1, 2, 4, 6, 8, 10, 15, 25, 50, 100\}, \\
\mu &\in \{-1, 0, 0.1, 1, 5\}.
\end{aligned}
$$
To ensure numerical stability and prevent ill-conditioned solutions where $\|A\|$ is excessively large, we regularize the SDP by imposing $\|A\| \leq \kappa$, $\kappa=5$, via the constraint:$$-(B + B^\top) - \frac{2}{\kappa} I \succeq 0.$$
The results are shown in Fig. \ref{fig:1} and Fig. \ref{fig:2}. 

Overall, the experimental results confirm that the proposed data-consistency conditions correctly identify the underlying system dynamics for the considered class of port-Hamiltonian systems. Notably, when testing the convex pH system in Example \ref{ex:1}, we find feasibility with $\mu$-strongly convex Hamiltonians even at relatively tight smoothness bounds (e.g., $L=4$ when $\mu=0$, and $L=6$ when $\mu=0.1$). 
In contrast, for the non-convex system in Example~4.2, the method correctly rejects $\mu$-strong convexity for small values of $L$. However, when the smoothness constraint is relaxed, the feasible set becomes over-approximated, eventually leading to incorrect convex feasibility at large values ($L=50$). 

Regarding derivative estimation, all methods show similar overall behavior. The midpoint discretization most closely matches the true feasibility boundary, while central finite differences are slightly less accurate and may introduce false feasibility at larger $L$ due to discretization error.

Finally, regarding data distribution, the total number of sampled points has a negligible impact on the feasibility boundaries of Example \ref{ex:2} (non-convex), while it has a relevant impact on Example \ref{ex:1}. This is likely due to the fact that the initial data points were easily interpolated using a convex gradient structure. The sampling interval $dt$ also proves to be influential: coarser sampling densities (i.e., larger values of $dt$) inherently reduce accuracy, causing the aforementioned false convexity to emerge at lower smoothness bounds ($L = 25$ instead of $L = 50$).  

\section{Discussion: Towards Data-Driven Control via Interpolation}\label{sec:disc}
While the focus of this paper
is on instant consistency analysis rather than control synthesis, the interpolation-based viewpoint naturally suggests a pathway toward data-driven control design. To stabilize port-Hamiltonian systems, a widely used approach is \emph{Interconnection and Damping Assignment Passivity-Based Control} (IDA-PBC) \cite{ortega2002interconnection}. The main idea is to reshape both the energy function and the system structure such that the closed-loop dynamics take a desired pH form with a minimum at the target equilibrium. In the following, we outline how the proposed framework could be extended to incorporate energy-shaping objectives as in IDA-PBC design.

At a high level, the interpolation conditions ensure that the measured data $\{x_i, \dot{x}_i, u_i\}$ are consistent with dynamics of the form \eqref{eq:pH^{(d)}} with $H$ in a structured function class. In Section \ref{sec:main_results}, we showed that this can be formulated as a feasibility problem over gradients $g_i = \nabla H(x_i)$ that satisfy both the data and the regularity properties of $H$. This viewpoint extends naturally to control by asking whether the data are compatible with a \emph{desired} closed-loop structure of the form
\[
\dot{x} = (J^{(d)} - R^{(d)}) \nabla H^{(d)}(x),
\]
where $H^{(d)}$ has a minimum at a prescribed equilibrium $x^\ast$ and satisfies suitable regularity properties (e.g., local strong convexity). Following IDA-PBC principles, this leads to constraints of the form
\[
G^\perp Y = G^\perp A^{(d)} g^{(d)}_i, \quad g^{(d)}_i = \nabla H^{(d)}(x_i),
\]
with $Y$ as in \eqref{eq:y}, enforcing that the components of the dynamics not influenced by the input match the desired energy-shaped structure. In this way, the control objective is embedded directly into interpolation conditions on gradients and energy values.

This perspective offers several conceptual advantages. First, it bypasses explicit model identification or parametrization, replacing it with feasibility conditions that can be checked directly from data. Second, it aligns naturally with passivity-based control, where stability is encoded through energy shaping and dissipation, e.g.,
\[
A^{(d)} + (A^{(d)})^\top \preceq 0, \quad \nabla H^{(d)}(x^\ast) = 0.
\]
Third, it suggests the possibility of constructing controllers by interpolating gradients and energy values consistent with both the data and the desired equilibrium behavior.

At the same time, significant challenges remain. The resulting constraints are more complex than in the open-loop case, involving bilinear terms such as $A^{(d)} g^{(d)}_i$ and additional degrees of freedom induced by the input directions, 
Moreover, interpolation conditions enforce consistency only at sampled points and do not guarantee desirable behavior between samples. 
Another challenge is that consistency with IDA-PBC design is only a necessary condition, 
as it certifies compatibility with an IDA-PBC design at the data level, but does not yet guarantee stabilizability. Bridging this gap will likely require further assumption on the data. 

Despite these challenges, the interpolation-based viewpoint provides a promising direction for data-driven control. It suggests a unified framework in which system structure, energy shaping, and control objectives are encoded directly as constraints on data. Further developments may enable the synthesis of passivity-based controllers directly from measurements, particularly when first-principles models are unavailable.

\section{Conclusions}\label{sec:conc}
In this paper, we developed an interpolation-based framework to assess instant data consistency with port-Hamiltonian systems directly from measured trajectories, without requiring explicit model identification. By leveraging interpolation conditions for smooth and convex function classes, we derived necessary and sufficient algebraic conditions for consistency with structured dynamical models. We further provided a convex semidefinite programming formulation for the full-rank case, enabling efficient verification from data. 

Beyond consistency analysis, we outlined how this framework may be extended toward data-driven control design, particularly within passivity-based control approaches. An important limitation of the current framework is its pointwise nature, which does not enforce trajectory-level coherence. Extending these results to noisy and trajectory-consistent settings remains an important direction for future work.




	\bibliographystyle{ieeetr}
	\bibliography{bib}

@article{berberich2020data,
  title={Data-driven model predictive control with stability and robustness guarantees},
  author={Berberich, Julian and K{\"o}hler, Johannes and M{\"u}ller, Matthias A and Allg{\"o}wer, Frank},
  journal={IEEE Transactions on Automatic Control},
  volume={66},
  number={4},
  pages={1702--1717},
  year={2020},
  publisher={IEEE}
}

@article{bousselmi2024interpolation,
  title={Interpolation conditions for linear operators and applications to performance estimation problems},
  author={Bousselmi, Nizar and Hendrickx, Julien M and Glineur, Fran{\c{c}}ois},
  journal={SIAM Journal on Optimization},
  volume={34},
  number={3},
  pages={3033--3063},
  year={2024},
  publisher={SIAM}
}

@article{niknejad2023physics,
  title={Physics-informed data-driven safe and optimal control design},
  author={Niknejad, Nariman and Modares, Hamidreza},
  journal={IEEE Control Systems Letters},
  volume={8},
  pages={285--290},
  year={2023},
  publisher={IEEE}
}

@article{campi2002vrft,
  author  = {Campi, M. C. and Lecchini, A. and Savaresi, S. M.},
  title   = {Virtual reference feedback tuning: a direct method for the design of feedback controllers},
  journal = {Automatica},
  volume  = {38},
  number  = {8},
  pages   = {1337--1346},
  year    = {2002}
}

@article{tanaskovic2017data,
  author  = {Tanaskovic, M. and Fagiano, L. and Novara, C. and Morari, M.},
  title   = {Data-driven control of nonlinear systems: An on-line direct approach},
  journal = {Automatica},
  volume  = {75},
  pages   = {1--10},
  year    = {2017}
}

@article{fliess2013modelfree,
  author  = {Fliess, M. and Join, C.},
  title   = {Model-free control},
  journal = {International Journal of Control},
  volume  = {86},
  number  = {12},
  pages   = {2228--2252},
  year    = {2013}
}

@article{fraile2020stabilization,
  author  = {Fraile, L. and Marchi, M. and Tabuada, P.},
  title   = {Data-driven stabilization of SISO feedback linearizable systems},
  journal = {arXiv preprint arXiv:2003.14240},
  year    = {2020}
}

@article{berberich2024overview,
  author  = {Berberich, J. and Allgöwer, F.},
  title   = {An overview of systems-theoretic guarantees in data-driven MPC},
  journal = {Annual Review of Control, Robotics, and Autonomous Systems},
  volume  = {8},
  year    = {2024}
}

@article{proctor2016dmdc,
  author  = {Proctor, J. L. and Brunton, S. L. and Kutz, J. N.},
  title   = {Dynamic mode decomposition with control},
  journal = {SIAM Journal on Applied Dynamical Systems},
  volume  = {15},
  number  = {1},
  pages   = {142--161},
  year    = {2016}
}

@article{korda2018koopman,
  author  = {Korda, M. and Mezić, I.},
  title   = {Linear predictors for nonlinear dynamical systems: Koopman operator meets MPC},
  journal = {Automatica},
  volume  = {93},
  pages   = {149--160},
  year    = {2018}
}

@article{bisoffi2020bilinear,
  author  = {Bisoffi, A. and De Persis, C. and Tesi, P.},
  title   = {Data-based stabilization of unknown bilinear systems with guaranteed basin of attraction},
  journal = {Systems \& Control Letters},
  volume  = {145},
  pages   = {104788},
  year    = {2020}
}

@article{guo2021polynomial,
  author  = {Guo, M. and De Persis, C. and Tesi, P.},
  title   = {Data-driven stabilization of nonlinear polynomial systems with noisy data},
  journal = {IEEE Transactions on Automatic Control},
  volume  = {67},
  number  = {8},
  pages   = {4210--4217},
  year    = {2021}
}

@inproceedings{strasser2021beyond,
  author    = {Strässer, R. and Berberich, J. and Allgöwer, F.},
  title     = {Data-driven control of nonlinear systems: Beyond polynomial dynamics},
  booktitle = {2021 IEEE CDC},
  pages     = {4344--4351},
  year      = {2021}
}

@inproceedings{alsalti2021flat,
  author    = {Alsalti, M. and Berberich, J. and López, V. G. and Allgöwer, F. and Müller, M. A.},
  title     = {Data-based system analysis and control of flat nonlinear systems},
  booktitle = {2021 IEEE CDC},
  pages     = {1484--1489},
  year      = {2021}
}

@article{monshizadeh2025versatile,
  title={A versatile framework for data-driven control of nonlinear systems},
  author={Monshizadeh, Nima and De Persis, Claudio and Tesi, Pietro},
  journal={IEEE Transactions on Automatic Control},
  year={2025},
  publisher={IEEE}
}

@article{van2014port,
  title={Port-Hamiltonian systems theory: An introductory overview},
  author={Van Der Schaft, Arjan and Jeltsema, Dimitri},
  journal={Foundations and Trends{\textregistered} in Systems and Control},
  volume={1},
  number={2-3},
  pages={173--378},
  year={2014},
  publisher={Emerald Publishing Limited Boston—Delft}
}

@article{ortega2002interconnection,
  title={Interconnection and damping assignment passivity-based control of port-controlled Hamiltonian systems},
  author={Ortega, Romeo and Van Der Schaft, Arjan and Maschke, Bernhard and Escobar, Gerardo},
  journal={Automatica},
  volume={38},
  number={4},
  pages={585--596},
  year={2002},
  publisher={Elsevier}
}

@article{kotyczka2021symplectic,
  title={Symplectic discrete-time energy-based control for nonlinear mechanical systems},
  author={Kotyczka, Paul and Thoma, Tobias},
  journal={Automatica},
  volume={133},
  pages={109842},
  year={2021},
  publisher={Elsevier}
}

@inproceedings{beckers2023data,
  title={Data-driven Bayesian control of port-Hamiltonian systems},
  author={Beckers, Thomas},
  booktitle={2023 62nd IEEE Conference on Decision and Control (CDC)},
  pages={8708--8713},
  year={2023},
  organization={IEEE}
}

@inproceedings{rubbens2023interpolation,
  title={Interpolation constraints for computing worst-case bounds in performance estimation problems},
  author={Rubbens, Anne and Bousselmi, Nizar and Colla, S{\'e}bastien and Hendrickx, Julien M},
  booktitle={2023 62nd IEEE Conference on Decision and Control (CDC)},
  pages={3015--3022},
  year={2023},
  organization={IEEE}
}

@book{van2000l2,
  title={L2-gain and passivity techniques in nonlinear control},
  author={Van der Schaft, Arjan},
  year={2000},
  publisher={Springer}
}

@phdthesis{taylor2017convex,
  title={Convex interpolation and performance estimation of first-order methods for convex optimization},
  author={Taylor, Adrien B},
  year={2017},
  school={UCLouvain}
}

@article{vanelli2025interpolation,
  title={Interpolation Conditions for Data Consistency and Prediction in Noisy Linear Systems},
  author={Vanelli, Martina and Monshizadeh, Nima and Hendrickx, Julien M},
  journal={arXiv preprint arXiv:2504.08484},
  year={2025}}

@article{bisoffi2022data,
  title={Data-driven control via Petersen’s lemma},
  author={Bisoffi, Andrea and De Persis, Claudio and Tesi, Pietro},
  journal={Automatica},
  volume={145},
  pages={110537},
  year={2022},
  publisher={Elsevier}
}

@article{taylor2017smooth,
  title={Smooth strongly convex interpolation and exact worst-case performance of first-order methods},
  author={Taylor, Adrien B and Hendrickx, Julien M and Glineur, Fran{\c{c}}ois},
  journal={Mathematical Programming},
  volume={161},
  number={1},
  pages={307--345},
  year={2017},
  publisher={Springer}
}

@article{markovsky2021behavioral,
  title={Behavioral systems theory in data-driven analysis, signal processing, and control},
  author={Markovsky, Ivan and D{\"o}rfler, Florian},
  journal={Annual Reviews in Control},
  volume={52},
  pages={42--64},
  year={2021},
  publisher={Elsevier}
}

@article{van2020willems,
  title={Willems’ fundamental lemma for state-space systems and its extension to multiple datasets},
  author={Van Waarde, Henk J and De Persis, Claudio and Camlibel, M Kanat and Tesi, Pietro},
  journal={IEEE Control Systems Letters},
  volume={4},
  number={3},
  pages={602--607},
  year={2020},
  publisher={IEEE}
}

@article{de2019formulas,
  title={Formulas for data-driven control: Stabilization, optimality, and robustness},
  author={De Persis, Claudio and Tesi, Pietro},
  journal={IEEE Transactions on Automatic Control},
  volume={65},
  number={3},
  pages={909--924},
  year={2019},
  publisher={IEEE}
}

@article{berberich2022combining,
  title={Combining prior knowledge and data for robust controller design},
  author={Berberich, Julian and Scherer, Carsten W and Allg{\"o}wer, Frank},
  journal={IEEE Transactions on Automatic Control},
  volume={68},
  number={8},
  pages={4618--4633},
  year={2022},
  publisher={IEEE}
}

@article{Wang2007,
  author  = {Y. Wang and G. Feng and D. Cheng},
  title   = {Simultaneous stabilization of a set of nonlinear port-controlled Hamiltonian systems},
  journal = {Automatica},
  volume  = {43},
  number  = {3},
  pages   = {403--415},
  year    = {2007}
}

@inproceedings{Dirksz2010,
  author    = {D. A. Dirksz and J. M. Scherpen},
  title     = {Adaptive tracking control of fully actuated port-Hamiltonian mechanical systems},
  booktitle = {IEEE International Conference on Control Applications},
  pages     = {1678--1683},
  year      = {2010}
}

@article{Sprangers2014,
  author  = {O. Sprangers and R. Babu{\v{s}}ka and S. P. Nageshrao and G. A. Lopes},
  title   = {Reinforcement learning for port-Hamiltonian systems},
  journal = {IEEE Transactions on Cybernetics},
  volume  = {45},
  number  = {5},
  pages   = {1017--1027},
  year    = {2014}
}

@article{Fujimoto2003,
  author  = {K. Fujimoto and T. Sugie},
  title   = {Iterative learning control of Hamiltonian systems: I/O based optimal control approach},
  journal = {IEEE Transactions on Automatic Control},
  volume  = {48},
  number  = {10},
  pages   = {1756--1761},
  year    = {2003}
}

@inproceedings{BeckersCDC2023,
  author    = {T. Beckers},
  title     = {Data-driven Bayesian control of port-Hamiltonian systems},
  booktitle = {Proceedings of the 62nd IEEE Conference on Decision and Control (CDC)},
  pages     = {8708--8713},
  year      = {2023}
}

@inproceedings{maithripala2003nonlinear,
  title={Nonlinear dynamic output feedback stabilization of electrostatically actuated MEMS},
  author={Maithripala, DH Sanjeeva and Berg, Jordan M and Dayawansa, Wijesuriya P},
  booktitle={42nd IEEE International Conference on Decision and Control (IEEE Cat. No. 03CH37475)},
  volume={1},
  pages={61--66},
  year={2003},
  organization={IEEE}
}

@article{van2013port,
  title={Port-Hamiltonian systems on graphs},
  author={Van der Schaft, Arjan J and Maschke, Bernhard M},
  journal={SIAM Journal on Control and Optimization},
  volume={51},
  number={2},
  pages={906--937},
  year={2013},
  publisher={SIAM}
}

@inproceedings{BeckersGPPHS2022,
  author    = {T. Beckers and J. Seidman and P. Perdikaris and G. J. Pappas},
  title     = {Gaussian Process Port-Hamiltonian Systems: Bayesian Learning with Physics Prior},
  booktitle = {IEEE Conference on Decision and Control (CDC)},
  year      = {2022}
}

@article{BeckersColombo2025,
  author  = {T. Beckers and L. Colombo},
  title   = {Physics-informed Learning for Passivity-based Tracking Control},
  journal = {arXiv preprint arXiv:2505.01569},
  year    = {2025}
}

@article{Ryalat2018,
  author  = {M. Ryalat and D. S. Laila},
  title   = {A robust IDA-PBC approach for handling uncertainties in underactuated mechanical systems},
  journal = {IEEE Transactions on Automatic Control},
  volume  = {63},
  number  = {10},
  pages   = {3495--3502},
  year    = {2018}
}

\end{document}